\documentclass[11pt,a4paper]{amsart}
\usepackage{amsthm, a4wide}
\usepackage{amsfonts,bbm}
\usepackage{amsmath}
\usepackage{amssymb}
\usepackage[english]{babel}
\usepackage[active]{srcltx}

\newtheorem{theorem}{Theorem}[section]
\newtheorem{corollary}[theorem]{Corollary}
\newtheorem{lemma}[theorem]{Lemma}
\newtheorem{proposition}[theorem]{Proposition}

\theoremstyle{definition}

\newtheorem{remark}{Remark}

\newcommand{\eps}{\varepsilon}
\let\e=\varepsilon

\let\O=\Omega
\let\ol=\overline
\let\ul=\underline
\newcommand{\ox}{\overline{x}}

\newcommand{\oa}{\overline{\alpha}}

\def\limn{\lim_{n\to\infty}}

\def\N{\mathbb{N}}

\def\R{\mathbb{R}}

\newcommand{\Mi}{\mathbf{M}}
\newcommand{\tr}{\operatorname{\text{tr}}}
\newcommand{\om}{\Omega}
\newcommand{\de}{\partial}

\DeclareMathOperator{\dist}{dist}
\DeclareMathOperator\diam{diam}

\begin{document} 

 \title[On a parabolic HJB equation degenerating at the boundary]{On a  parabolic Hamilton-Jacobi-Bellman equation degenerating at the boundary}
 \author{Daniele Castorina}\thanks{Dipartimento di Matematica, Universit\`a
    di Padova, Via Trieste 63, 35121 Padova, Italy,\\ email:
castorin@math.unipd.it}\author{ Annalisa Cesaroni}\thanks{Dipartimento di Scienze Statistiche, Universit\`a
    di Padova, Via Cesare Battisti 141, 35121 Padova, Italy, email:
   annalisa.cesaroni@unipd.it} \author{ Luca Rossi }\thanks{Dipartimento di Matematica, Universit\`a
    di Padova, Via Trieste 63, 35121 Padova, Italy,\\ email:
   lucar@math.unipd.it}

 \thanks{D.C. was partially supported by project {\sl Bando Giovani Studiosi 2013 - Universit\` a di Padova - GRIC131695}. A.C. and L.R. were partially supported by the GNAMPA  Project 2015 Processi di diffusione degeneri o singolari legati al controllo di 
dinamiche stocastiche}
\date{}
 \begin{abstract}
We derive the long time asymptotic of solutions to an evolutive
Hamilton-Jacobi-Bellman equation in a bounded smooth domain, in connection with
ergodic problems recently studied in \cite{bcr}. Our main assumption is an
appropriate degeneracy condition on the  operator  at the boundary. This
condition is related to the characteristic boundary points for linear operators
as well as to the irrelevant points for the generalized Dirichlet problem, and
implies in particular that no boundary datum has to be imposed.  We prove that
there exists a constant $c$ such that the solutions of the evolutive problem
converge uniformly, in the reference frame moving with constant velocity $c$, to
a unique steady state solving a suitable ergodic problem.
\end{abstract} 
\maketitle
{\bf Keywords}: Large time behavior, Hamilton-Jacobi-Bellman operators, ergodic problem, characteristic boundary points.

\section{Introduction} \label{intro}
We are concerned with the asymptotic behavior as $t\to +\infty$ 
of solutions of the evolutive Hamilton-Jacobi-Bellman equation 
 \begin{equation}
 \label{paraintro}   
 u_t
 + \sup_{\alpha\in A} \left(-b(x,\alpha)\cdot Du
 - \tr (a(x,\alpha) 
D^2u
)-l(x,\alpha)
\right)=0,\quad x\in\om,\ t>0, 
\end{equation}
with bounded initial data $u(x,0)=u_0(x)$. The domain $\Omega\subset\R^N$ is
assumed to be bounded and smooth; no boundary condition is imposed, but just the
following control on the growth:

 \begin{equation}
 \label{BC}
\exists \; \lambda>0,\quad
\lim_{x\to \partial\O} u(x,t) d(x)^\lambda =0\qquad \text{loc. unif. in }t \geq
0
 \end{equation}

where $d(x)$ is the distance of $x \in \Omega$ from $\partial\Omega$. 

The main assumption is that the fully nonlinear elliptic operator 
 \begin{equation}
 \label{F0} 
F[u]:= \sup_{\alpha\in A} \left(-b(x,\alpha)\cdot Du(x)- \tr 
\left(a(x,\alpha)D^2 u(x)\right)\right) ,
 \end{equation}
degenerates in the normal direction to the boundary $\partial\Omega$, for all $\oa\in A$,   and 
the quantity $b(x,\oa)\cdot Dd(x) +\tr(a(x,\oa) D^2d(x))$ is positive near 
$\partial\Omega$ (see Assumption \eqref{invariance}). \\

This condition is related to the  invariance of the set $\Omega$ for the 
controlled diffusion process associated with the operator $F$ (see \cite{bcr}).
It allows  us to prove existence and uniqueness of a smooth solution to the Cauchy
problem associated with \eqref{paraintro}, without
imposing any boundary condition on $\partial\O$, but just the control on the
growth (see Theorem \ref{escauchy}). 

Once the well posedness of the Cauchy problem is established, we investigate
the large time behavior.
Our main result states that there exists a unique constant $c$, called the
ergodic constant, which governs the large time behavior of solutions, in the
following sense:
for every bounded continuous initial datum $u_0$, there exists a constant $K$,
depending only on $u_0$, such that the unique smooth solution to the Cauchy
problem satisfies   
\[u(x,t)+ct-\chi(x)+K\to 0 \qquad \text{as $t\to +\infty$, \
uniformly in $x\in\om$}.\]
For the precise statement we refer to Corollary \ref{main} at the end of the
paper.
The constant $c$ and the function $\chi$ are uniquely defined as the solution 
of the so called ergodic problem (or additive eigenvalue problem), that is,

\begin{equation}
 \label{cell1}\begin{cases}  
\sup_{\alpha\in A} \left(-b(x,\alpha)\cdot D\chi(x)- \tr (a(x,\alpha) 
D^2\chi(x))-l(x,\alpha)
\right)=c,\qquad   x\in\Omega\\ \chi\in L^\infty(\Omega), \
\sup\chi=0.\end{cases}
\end{equation} 

Recently in \cite{bcr} it has been proved that there exists a unique $c$ 
such that the first equation in \eqref{cell1} admits a smooth solution (unique
up to additive constants) satisfying the growth condition \[\lim_{x\to
\partial\O}\frac{\chi(x)}{\log d(x)} =0.\] In this paper, we refine this result,
showing that actually the solution $\chi$ is bounded in $\Omega$ by using
appropriate bounded barriers at the boundary of $\Omega$ (see Proposition
\ref{chibounded}).
Moreover, in Proposition \ref{chibounded} we derive  some regularity estimates 
of the solution $\chi$ to \eqref{cell1} up to the boundary of $\Omega$, which in
particular imply H\"older regularity of $\chi$ up to the boundary in the 1D
case.
An interesting open problem is to determine under which conditions $\chi$
is Lipschitz-continuous up to the boundary. 
Such regularity cannot be expected in general under our assumptions, as shown
in Remark \ref{regolaritachi} in Section \ref{bdry}. Analogous
regularity results for
solutions of ergodic problems for linear operators with singular drift in
bounded domains have been obtained in \cite{lp}. 
Moreover we recall that the generalized Dirichlet problem and 
the state constraint problem for Hamilton-Jacobi-Bellman operators  in bounded domains  has been studied in \cite{bb,il,ll}. 

Our methods are mainly based on comparison principle, 
strong maximum principle and careful estimates of solutions to
\eqref{paraintro} up to the boundary of $\Omega$ (see Proposition
\ref{estimateboundary}). 

Finally we recall that large time asymptotic of solutions to fully nonlinear
parabolic equations have been studied in the periodic setting by Barles and Souganidis in \cite{bs}. 
More recently, the large time behavior in the periodic setting for possibly degenerate Hamilton-Jacobi equations has been treated in \cite{cgmt,ln}.
Results on the large time behaviour of solutions in bounded domains have been obtained by Da Lio with Neumann boundary conditions in \cite{dl}, 
and with state constraint boundary conditions by Barles, Porretta and Tchamba in \cite{bpt}. 
\\

The paper is organized as follows. In Section \ref{hyp} we specify our
assumptions and set up convenient notations for the development of our study.
Section \ref{lyapbarr} is devoted to the explicit construction of 
Lyapunov functions and bounded barriers. In Section \ref{cauchy}
we study the well posedness of the Cauchy problem associated with
\eqref{paraintro} as well as some \textit{ad hoc} comparison principles for
sub/super solutions satisfying mild growth conditions at the boundary. Next, in
Section \ref{bdry} we   apply these results to the study of the boundary
behavior and the sharp regularity of the solution to \eqref{paraintro}. Finally,
we establish in Section \ref{conv} our main result about the large time
convergence towards a steady state solving a suitable additive eigenvalue
problem.

\section{Assumptions and notations} \label{hyp}
Throughout the paper we will assume, if not otherwise stated, that $\O$ be a 
bounded domain in $\R^N$ with $C^2$ boundary. 
Let $d(x)$ be the signed distance function to $\partial\Omega$, i.e. 
\begin{equation*}
d(x):=\dist(x, \R^n\backslash\Omega)-\dist(x, \overline{\Omega}).
\end{equation*} 
We know, from e.g.~\cite[Lemma 14.16]{gt}, that $d$ is of class $C^2$ in some 
neighborhood $\overline{\om}_\delta$ of the boundary, where, here and in the sequel, 
\begin{equation}\label{odelta}
\Omega_\delta:=\{x\in\Omega \ |\ d(x)<\delta\}.
\end{equation} 
We introduce the fully nonlinear homogeneous operator  
\begin{equation}\label{F} 
F[u]:= \sup_{\alpha\in A} \left(-b(x,\alpha)\cdot Du(x)- \tr 
\left(a(x,\alpha)D^2 u(x)\right)\right).
 \end{equation} 
and the Hamilton-Jacobi-Bellman operator 
 \begin{equation}
 \label{h}   
H[u]:=\sup_{\alpha\in A} \left(-b(x,\alpha)\cdot Du(x)- \tr (a(x,\alpha) 
D^2u(x))-l(x,\alpha)\right).
\end{equation}
where $A $ is a complete metric space and
\[b,l:\overline{\Omega}\times A\to \R^N,\qquad a:\overline{\Omega}\times A\to 
\Mi_{N\times N} 
\]
are bounded and continuous, $\Mi_{N\times N}$ being the space of  
$N\times N$ real matrices. 
We further assume $a(x,\alpha)$ to be symmetric and nonnegative definite for  
all $x,\alpha$.  
This implies that  $a \equiv \sigma\sigma^T$
for some    $\sigma: \overline{\Omega}\times A\to \Mi_{N\times r}$,  $r\in\N$. 

The main regularity assumptions on the coefficients of the operator are the
following: 
there exist $B>0$, $\eta \in (0,1]$, $\beta\in (1/2,1]$ such that,  for all 
$x,y\in\overline\O$ and $\alpha\in A$,
\begin{equation}
\label{reg3}  |b(x,\alpha)-b(y,\alpha)|, |l(x,\alpha)-l(y,\alpha)|\leq 
B|x-y|^\eta
\end{equation}
\begin{equation}\label{reg2} 
|\sigma(x,\alpha)-\sigma(y,\alpha)|\leq 
B|x-y|^\beta,
\end{equation}
where, even for matrices, $|\cdot|$ stands for the standard Euclidean norm. 
The regularity assumption on $a$ 
is given in terms of
its square root $\sigma$ as it is natural for applications  to 
stochastic control problems. We recall that if  $a(\cdot, \alpha)\in W^{2, 
p}(\Omega)$ with $\|a(\cdot,\alpha)\|_{W^{ 2,p}}\leq C$ for some $p>2N$ and 
$C>0$ independent of $\alpha\in A$, then it has a square root $\sigma$
satisfying \eqref{reg2} 
with  $\beta= 1-\frac{N}{p}$.
   
We assume that the operator is elliptic in the interior of $\O$, in the strong sense
 \begin{equation}
\label{sell} \text{$a(x,\alpha)>0$\quad for 
all $x\in\Omega$ and $\alpha\in 
A$  }\end{equation}
and that it degenerates at the boundary according to the following condition:
\begin{equation}
\label{invariance}
\begin{array}{c}
\exists \;\delta,k>0,\ \gamma<2\beta-1\leq 1,\quad
\text{such that for all }\overline{x}\in\partial \Omega\text{ and }\alpha\in
A,\\
\begin{cases}
\sigma^T (\ox,\alpha) Dd(\ox)=0 \\
b(x,\alpha)\cdot Dd(x)+\tr \left(a(x,\alpha)D^2 d(x)\right)
\geq k\, d^\gamma(x), & x\in\Omega\cap B_\delta(\overline x). 
\end{cases}
\end{array}
\end{equation} 

The first condition in \eqref{invariance} means that at any boundary point, the 
normal is a direction of degeneracy for $F$.
The second condition can be rewritten as: $F[d]\leq - k\, d^\gamma$ in a 
neighborhood of $\partial\Omega$; it is guaranteed if at the boundary 
the normal component of 
the drift points inward and is sufficiently large. 
Notice however that  condition \eqref{invariance} does not prevent the function 
$b(\cdot,\alpha)\cdot Dd(\cdot)+\tr(a(\cdot,\alpha) D^2d(\cdot))$ from vanishing at 
the boundary.  

We recall that \eqref{invariance} is a sufficient condition for  the invariance of the domain $\O$ 
for the stochastic control system with drift $b$ and diffusion $\sigma$ (see Prop. 6.5 in \cite{bcr}).


\section{Lyapunov functions and barriers at the boundary} \label{lyapbarr}
In this section we show that  under condition \eqref{invariance} the function $V(x)=d(x)^{-\lambda}$, for $\lambda
>0$, plays the role of a Lyapunov function for the system (see also \cite{bcr}). 
\begin{lemma}
\label{liap}
Assume that \eqref{reg3}, \eqref{reg2}, \eqref{invariance} hold. Then 
 for every $M\geq 0$ and every $\lambda>0$, there exists    $\delta>0$ such
that 
$d$ is of class $C^2$ in the set $\Omega_\delta$ defined by \eqref{odelta} and
there holds
\[-F[d^{-\lambda}]\leq F[-d^{-\lambda}]\leq-M \qquad 
\text{in }\;\Omega_\delta.\]
\end{lemma}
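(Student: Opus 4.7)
The plan is to compute $F[\pm d^{-\lambda}]$ directly from its definition and exploit the two parts of the degeneracy assumption \eqref{invariance} together with the H\"older regularity \eqref{reg2} of $\sigma$. I denote throughout $L_\alpha[u] := -b(x,\alpha)\cdot Du - \tr(a(x,\alpha)D^2u)$, so that $F = \sup_{\alpha\in A} L_\alpha$, and $V := d^{-\lambda}$ on a neighborhood $\Omega_\delta$ of $\partial\Omega$ where $d \in C^2$.

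The inequality $-F[V] \leq F[-V]$ is essentially free: since $L_\alpha$ is linear, $L_\alpha[-V] = -L_\alpha[V]$, hence
\[
-F[V] \;=\; -\sup_{\alpha} L_\alpha[V] \;=\; \inf_{\alpha} L_\alpha[-V] \;\leq\; \sup_{\alpha} L_\alpha[-V] \;=\; F[-V].
\]
All the work is therefore in showing $F[-V] \leq -M$ on a sufficiently small $\Omega_\delta$. A direct calculation from $D(-d^{-\lambda}) = \lambda d^{-\lambda-1}Dd$ and $D^2(-d^{-\lambda}) = \lambda d^{-\lambda-1}D^2 d - \lambda(\lambda+1) d^{-\lambda-2}Dd\otimes Dd$ gives, for every $\alpha$,
\[
L_\alpha[-V] \;=\; -\lambda\, d^{-\lambda-1}\bigl[b(x,\alpha)\cdot Dd(x) + \tr(a(x,\alpha)D^2d(x))\bigr] \;+\; \lambda(\lambda+1)\, d^{-\lambda-2}\,|\sigma^T(x,\alpha)Dd(x)|^2,
\]
where I used $(Dd)^T a Dd = |\sigma^T Dd|^2$. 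The second line of \eqref{invariance} bounds the bracketed term below by $k d^\gamma$ uniformly in $\alpha$ (for $\delta$ small enough that every $x \in \Omega_\delta$ lies in a ball $B_\delta(\ox)$ around its projection $\ox\in\partial\Omega$).

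The main technical point is to estimate the quadratic term $|\sigma^T(x,\alpha) Dd(x)|^2$. Taking $\ox$ to be the nearest-point projection of $x$ to $\partial\Omega$ (so $|x - \ox| = d(x)$) and using the first line of \eqref{invariance} to write $\sigma^T(\ox,\alpha)Dd(\ox) = 0$, the triangle inequality combined with \eqref{reg2}, the boundedness of $\sigma$, and the Lipschitz continuity of $Dd$ (available since $d\in C^2(\Omega_\delta)$) yields
\[
|\sigma^T(x,\alpha)Dd(x)| \;\leq\; |\sigma^T(x,\alpha)-\sigma^T(\ox,\alpha)|\cdot|Dd(x)| + |\sigma^T(\ox,\alpha)|\cdot|Dd(x)-Dd(\ox)| \;\leq\; C\,d(x)^\beta,
\]
uniformly in $\alpha$, and hence $|\sigma^T Dd|^2 \leq C^2 d^{2\beta}$.

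Plugging both bounds back gives, uniformly in $\alpha\in A$ and $x\in\Omega_\delta$,
\[
L_\alpha[-V] \;\leq\; -\lambda k\, d^{-\lambda-1+\gamma} + C^2\lambda(\lambda+1)\, d^{-\lambda-2+2\beta},
\]
so the same bound holds for $F[-V] = \sup_\alpha L_\alpha[-V]$. The structural hypothesis $\gamma < 2\beta - 1$ is precisely what makes the (negative) first term dominate as $d\to 0^+$, while $\gamma < 1 \leq 1+\lambda$ ensures $d^{-\lambda-1+\gamma}\to+\infty$; both together force $F[-V] \to -\infty$, so we can choose $\delta$ small enough (depending on $M,\lambda,k,\gamma,\beta,C$) to achieve $F[-V] \leq -M$ on $\Omega_\delta$. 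The only non-routine step is the quadratic estimate, whose success hinges crucially on the normal-tangency $\sigma^T Dd = 0$ at $\partial\Omega$ together with $\beta > 1/2$; without $\beta > 1/2$, the exponent condition $\gamma < 2\beta - 1$ would be incompatible with $\gamma > -1$ (which is needed for $d^\gamma$ to be meaningful as a barrier), so the assumption $\beta\in(1/2,1]$ is not cosmetic.
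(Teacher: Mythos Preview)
Your proof is correct and follows essentially the same route as the paper's: compute $F[-d^{-\lambda}]$, bound the drift term via the second line of \eqref{invariance}, estimate the quadratic term $|\sigma^T Dd|^2$ by $O(d^{2\beta})$ using the first line of \eqref{invariance} and \eqref{reg2}, and conclude from $\gamma<2\beta-1$. The only minor difference is in the quadratic estimate: the paper exploits the identity $Dd(x)=Dd(\ox)$ (valid for the signed distance when $\ox$ is the nearest boundary point) to make your second triangle-inequality term vanish outright, yielding the clean constant $B^2$ instead of a generic $C^2$; your Lipschitz argument for $Dd$ is a harmless detour since that term is in fact zero.
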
 
\begin{proof} The first inequality immediately follows from the definition of
$F$. 
For the second one we take $\delta$ small enough so that $d\in
C^2(\Omega_\delta)$ and we compute, for $x\in \Omega_\delta$,
\[F[-d^{-\lambda}]= 
\frac{\lambda}{d^{\lambda+1}}\sup_{\alpha\in A} \left(-b(x,\alpha)\cdot Dd-
\tr 
\left(a(x,\alpha)D^2 d \right)+ \frac{\lambda +1}{d}|\sigma(x,\alpha)Dd|^2
\right).\]
Using \eqref{reg2},  \eqref{invariance} and  choosing $\bar x\in\partial\O$ such that $Dd(x)=Dd(\bar x)$, we get for every $\alpha$ 
\begin{equation}\label{sigma}|\sigma(x,\alpha)Dd(x)|^2=|
(\sigma(x,\alpha)-\sigma(\bar x, \alpha))Dd(\bar 
x)|^2 \leq B^2 |x-\bar x|^{2\beta}=B^2 d^{2\beta}(x). \end{equation}
Then we obtain, using  \eqref{invariance} and recalling that $\gamma<2\beta-1$,
\begin{eqnarray*} F[-d^{-\lambda}]&\leq&
\frac{\lambda}{d^{\lambda+1}}\left(F[d]+B^2 (\lambda
+1)d^{2\beta-1}\right)
\\ &\leq& \frac{\lambda}{d^{\lambda+1}} \left(- k  d^{\gamma
}+B^2(\lambda+1)d^{2\beta-1} \right),\end{eqnarray*}
which is smaller than any given $-M$ in $\Omega_\delta$, provided $\delta$ is
sufficiently small.
\end{proof} 
In the following we will also need the existence of strict supersolutions to 
$F=0$ in a neighborhood of the boundary of $\O$ which are not explosive at the 
boundary. 
\begin{lemma}\label{barrier} Assume that \eqref{reg3}, \eqref{reg2}, 
\eqref{invariance} hold.
Let $\rho\in (0, 1-\gamma)$, where $\gamma$ is the constant appearing in condition \eqref{invariance}. Then 
for every $M>0$ there exists $\delta$ small enough such that the function
$1-d^\rho$ is $C^2 (\O_\delta)$ and satisfies 
\[-F[1-d^\rho ]\leq 
F[d^\rho-1]\leq -M \qquad \text{in }\;\O_\delta.\] 
\end{lemma}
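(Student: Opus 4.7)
The plan is to mirror the proof of Lemma \ref{liap}, exploiting the same cancellation between the two inequalities in \eqref{invariance} and the Hölder estimate on $\sigma^T Dd$, but now with a bounded test function $1-d^\rho$ in place of the explosive $d^{-\lambda}$.

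First, the inequality $-F[1-d^\rho]\leq F[d^\rho-1]$ is immediate, since $F$, being a supremum of linear functionals, is sublinear: for every $v$ one has $0=F[v-v]\leq F[v]+F[-v]$, i.e.\ $-F[v]\leq F[-v]$, and we apply this with $v=1-d^\rho$.

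For the nontrivial inequality, shrink $\delta$ so that $d\in C^2(\O_\delta)$ and compute, for $x\in\O_\delta$,
\[
D(d^\rho-1)=\rho d^{\rho-1}Dd,\qquad D^2(d^\rho-1)=\rho(\rho-1)d^{\rho-2}Dd\otimes Dd+\rho d^{\rho-1}D^2 d.
\]
Since $\rho<1$ we have $-\rho(\rho-1)=\rho(1-\rho)>0$, and using $\tr(a\, Dd\otimes Dd)=|\sigma^T Dd|^2$ we obtain, exactly as in Lemma \ref{liap},
\[
F[d^\rho-1]=\rho d^{\rho-1}\sup_{\alpha\in A}\left(-b(x,\alpha)\cdot Dd-\tr(a(x,\alpha)D^2 d)+\frac{1-\rho}{d}|\sigma(x,\alpha) Dd|^2\right).
\]
Now I would apply the estimate \eqref{sigma} from the previous proof (which gives $|\sigma(x,\alpha)Dd(x)|^2\leq B^2 d^{2\beta}(x)$ by \eqref{reg2}) together with \eqref{invariance} to bound the supremum, leading to
\[
F[d^\rho-1]\leq -\rho k\, d^{\rho+\gamma-1}+\rho(1-\rho)B^2\, d^{\rho+2\beta-2}.
\]

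The final step is to verify that the right-hand side tends to $-\infty$ as $d\to 0^+$. This is where the two range assumptions combine: the hypothesis $\rho\in(0,1-\gamma)$ ensures $\rho+\gamma-1<0$, so the first (negative) term blows up, while the assumption $\gamma<2\beta-1$ from \eqref{invariance} gives $\rho+\gamma-1<\rho+2\beta-2$, so the first term dominates the second as $d\to 0^+$. Hence one can choose $\delta$ so small that the right-hand side is $\leq -M$ throughout $\O_\delta$. There is no substantial obstacle here; the only subtlety, just as in Lemma \ref{liap}, is matching up the three exponents $\gamma$, $2\beta-1$ and the new parameter $\rho$ so that the degeneracy produced by \eqref{invariance} still overcomes the quadratic gradient term coming from the Hölder loss of $\sigma$.
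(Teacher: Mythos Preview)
Your proof is correct and follows essentially the same approach as the paper: compute $F[d^\rho-1]$ explicitly, use the estimate $|\sigma Dd|^2\leq B^2 d^{2\beta}$ from \eqref{sigma} together with $F[d]\leq -k d^\gamma$ from \eqref{invariance}, and conclude by comparing the exponents $\rho+\gamma-1<0$ and $\rho+\gamma-1<\rho+2\beta-2$. In fact your sign on the quadratic term, $(1-\rho)B^2$, is the correct one; the paper writes $(\rho-1)B^2$, which appears to be a typo (the argument there still concludes correctly only because the next line happens to be an even stronger bound).
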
 
\begin{proof}
As before, we prove the second inequality, since the first comes from the
definition of~$F$. 
For $\delta \in (0,1)$ small enough we have that the function $ d^\rho-1$ is
of class 
$C^2$ in $\O_\delta$, where it satisfies
\[F[ d^\rho-1]=\rho d^{\rho-1}\sup_\alpha \left( - b(x,\alpha)\cdot 
Dd- \frac{\rho-1}{d}|\sigma(x,\alpha)Dd|^2-  
\tr(a(x,\alpha)D^2d)
\right).\]
 Then, recalling that 
$0<\rho<1$ and the computation \eqref{sigma},  we obtain \[F[d^\rho-1]\leq \rho
d^{\rho-1}\left(F[d]+  
(\rho-1)B^2 d^{2\beta-1}\right)
.\] Hence,  using \eqref{invariance} and choosing $\rho<1-\gamma$, we see that
\[F[ d^\rho-1]\leq \rho d^{\gamma+\rho-1}\left(-k +   
(\rho-1)B^2 d^{2\beta-\gamma-1}\right)\leq -M   \qquad \text{in }\;\O_\delta
, \]  
provided $\delta$ is sufficiently small (depending in particular  on $k, B, 
\beta, \gamma, \rho, M$).\end{proof} 


\section{The Cauchy problem} \label{cauchy}
We study the following Cauchy problem in the 
cylinder $\O\times (0, +\infty)$: 
 \begin{equation}\label{Cauchy} 
  \begin{cases}
   u_t+H[u]=0, & x\in\Omega,\ t>0\\
u(x,0)=u_0(x), & x\in\Omega.
  \end{cases}
 \end{equation}
The initial condition is understood to hold in the classical sense, with 
$u_0\in C(\O)\cap  L^\infty(\O)$. Notice that no boundary condition on 
$\partial\O$ is 
imposed.

The first result is a comparison principle between smooth sub and supersolution
which satisfy an appropriate growth condition at the boundary. 

\begin{lemma}\label{lem:comparison}
Let $\ul u,\ol u\in C^{2,1}(\O\times (0,T])\cap C(\O\times [0,T])$ be
respectively a sub 
and a supersolution to \eqref{Cauchy} such that
$$
\exists \; \lambda>0,\quad
\limsup_{x\to\partial\om}  \ul u(x,t) d(x)^{\lambda}\leq 0\leq
\liminf_{x\to\partial\om} \ol u(x,t)d(x)^{\lambda}
\quad\text{uniformly in $t\in[0,T]$}.$$
Then $\ul u\leq\ol u$ in $\O\times (0,T]$.
\end{lemma}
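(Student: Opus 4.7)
The plan is to argue by contradiction, using the Lyapunov function of Lemma \ref{liap} together with a mild time-penalty to perturb $\underline{u}$ into a strict classical subsolution whose difference with $\overline{u}$ attains a positive maximum at an interior point, where a pointwise comparison yields the contradiction.

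Concretely, suppose $\sup_{\O\times(0,T]}(\underline{u}-\overline{u})>0$. From Lemma \ref{liap} I construct a positive $V\in C^2(\O)$ that coincides with $d^{-\lambda}$ in some $\O_\delta$ and is extended smoothly on $\{d\geq\delta\}$; then $F[-V]$ can be made as negative as desired near $\partial\O$, and is bounded above by some constant $C_0$ on the compact set $\{d\geq\delta\}$. Setting $\underline{u}_\eps(x,t):=\underline{u}(x,t)-\eps V(x)-\eps Ct$ with $C:=C_0+1$, the subadditivity of the supremum in the definition of $H$ gives $H[\underline{u}_\eps]\leq H[\underline{u}]+\eps F[-V]$, hence
\[(\underline{u}_\eps)_t+H[\underline{u}_\eps]\leq -\eps\quad\text{in }\O\times(0,T],\]
so $\underline{u}_\eps$ is a classical strict subsolution. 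The boundary hypothesis yields $(\underline{u}-\overline{u})d^\lambda\to 0$ uniformly in $t\in[0,T]$, whence $\underline{u}_\eps-\overline{u}\to-\infty$ as $x\to\partial\O$ uniformly in $t$, and the supremum of $\underline{u}_\eps-\overline{u}$ on $\O\times[0,T]$ is attained at some $(x_0,t_0)$ with $d(x_0)$ bounded away from $0$. Since $(\underline{u}_\eps-\overline{u})(\cdot,0)\leq-\eps V<0$, while for $\eps$ small enough the assumed positivity survives into $\sup(\underline{u}_\eps-\overline{u})>0$, we must have $t_0>0$. At this interior maximum, $\partial_t(\underline{u}_\eps-\overline{u})\geq 0$, $D(\underline{u}_\eps-\overline{u})=0$, and $D^2(\underline{u}_\eps-\overline{u})\leq 0$; combined with $a(x_0,\alpha)\geq 0$, the last inequality gives, pointwise in $\alpha$, $-b\cdot D\underline{u}_\eps-\tr(aD^2\underline{u}_\eps)-l\geq -b\cdot D\overline{u}-\tr(aD^2\overline{u})-l$, so taking $\sup_\alpha$ yields $H[\underline{u}_\eps](x_0,t_0)\geq H[\overline{u}](x_0,t_0)$. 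Subtracting the strict subsolution inequality for $\underline{u}_\eps$ from the supersolution inequality for $\overline{u}$ at $(x_0,t_0)$ then gives $\partial_t(\underline{u}_\eps-\overline{u})(x_0,t_0)\leq -\eps<0$, contradicting its non-negativity at a maximum in time.

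The main subtlety is the coordination of the two penalties: the spatial term $-\eps V$ is calibrated to dominate the boundary growth of $\underline{u}-\overline{u}$ and trap the supremum strictly inside $\O$ via the growth hypothesis, while the temporal term $-\eps Ct$ is needed solely to compensate the possibly positive values of $F[-V]$ away from $\partial\O$ and upgrade $\underline{u}_\eps$ to a \emph{strict} subsolution. Both scale with $\eps$, which is essential so that the assumed positivity of $\sup(\underline{u}-\overline{u})$ is preserved under the perturbation and produces a genuine contradiction; every other step is a routine consequence of the $C^{2,1}$ regularity of $\underline{u},\overline{u}$, the degenerate ellipticity $a\geq 0$, and the subadditivity of the supremum defining $H$.
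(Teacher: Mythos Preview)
Your proof is correct, but it follows a different line from the paper's. The paper works with $w:=\overline u-\underline u$, shows $w_t+F[w]\geq 0$, and then proceeds in two stages: first it compares $w$ with the barrier $m-\eps d^{-\lambda}$ only in the boundary collar $\O_\delta\setminus\O_{\delta'}$ (so no global extension of $d^{-\lambda}$ is needed), obtaining $w\geq m$ in $\O_\delta$; second it invokes the parabolic \emph{strong} maximum principle in the interior to rule out $m<0$. You instead globalize the Lyapunov function by extending $d^{-\lambda}$ to a $C^2$ positive $V$ on all of $\O$, add the time penalty $-\eps Ct$ to absorb the (merely bounded) contribution $\eps F[-V]$ on the interior, and run a direct first/second-order test at the interior maximum of $\underline u_\eps-\overline u$. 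Your route avoids the strong maximum principle at the price of the extension and the extra penalty; the paper's route avoids the extension and the time penalty at the price of invoking the strong maximum principle. Both are standard and equally valid here.
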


\begin{proof}
We start with observing that
\begin{equation}\label{HF}
\forall u,w\in C^2,\quad
H[u]-F[w]\leq H[u-w]\leq
H[u]+F[-w].
\end{equation}
The first inequality implies that the function $w:=\ol u-\ul u$ satisfies
$$w_t+F[w]\geq \ol u_t+H[\ol u]-\ul u_t-H[\ul u]\geq 0 ,\qquad x\in\Omega,\
t>0.$$
Furthermore, $w$ is nonnegative at $t=0$ and fulfills the same condition as 
$\ol 
u$ at $\de\O$.
We know from Lemma \ref{liap} that $F[-d(x)^{-\lambda}]<-1$ in some 
$\O_\delta$. Fix $T>0$ and call
$$m:=\min\left\{\min_{(\O\setminus \O_\delta)\times[0,T]}w
\,,\,0\right\}.$$
For $\e>0$, the function $V_\e(x):=m-\e d(x)^{-\lambda}$ 
satisfies $F[V_\e]<0$ in $\O_\delta$, as well as
$V_\e(x)<m\leq w(x,t)$ if $d(x)=\delta$, and also
$$\liminf_{x\to\partial\om}(w(x,t)-V_\e(x))=+\infty
\quad \text{ uniformly in }t\in[0,T].$$ 
The latter implies that, for $\delta'\in(0,\delta)$ small enough, 
$w(x,t)>V_\e(x)$ if $d(x)=\delta'$ and $t\in[0,T]$.  Finally, observe that 
$w\geq V_\e$
at initial time. 
We can therefore apply the standard parabolic comparison principle (see, e.g.,
\cite{Lieb})
in the cylinder  $(\Omega_\delta\setminus\Omega_{\delta'})\times[0,T]$ and infer that 
$w\geq V_\e$ there. Due to the arbitrariness of $\delta'$ and $\eps$, 
this implies that $w\geq m$ in $\O_\delta\times[0,T]$,
whence
$$\inf_{\O\times[0,T]}w\geq
\min\left\{\min_{(\O\setminus\O_\delta)\times[0,T]}w
\,,\,0\right\}.$$
If the above right-hand side were negative, since $w\geq0$ at $t=0$, it would
be 
reached 
at some $(\ul x,\ul t)\in (\O\setminus\O_\delta) \times(0,T]$, and therefore, 
by the parabolic strong 
maximum principle (see \cite{Lieb}), $w$ would coincide with a negative constant for 
$t\leq\ul t$, which is impossible. This shows that $w\geq0$ in $\O\times[0,T]$.
\end{proof}

\begin{theorem} \label{escauchy}
For any $u_0\in L^\infty(\Omega)\cap C(\Omega)$, problem \eqref{Cauchy} admits a
unique solution $u\in C^{2,1}(\Omega\times (0, +\infty))\cap 
C(\Omega\times [0,+\infty))$ satisfying \eqref{BC}. Moreover, $u\in  
L^\infty(\Omega\times (0,T))$ for every $T>0$.
\end{theorem}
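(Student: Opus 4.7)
My plan is as follows. Uniqueness is essentially immediate: two solutions $u_1,u_2\in C^{2,1}(\Omega\times(0,+\infty))\cap C(\Omega\times[0,+\infty))$ both satisfying \eqref{BC} can be plugged into Lemma~\ref{lem:comparison} in either direction on every time strip $[0,T]$, giving $u_1\equiv u_2$. The a priori $L^\infty(\Omega\times(0,T))$ bound I would get from constants as barriers: setting $L:=\sup_{\Omega\times A}|l|$, for every constant $c$ one has $H[c]=\sup_\alpha(-l(\cdot,\alpha))\in[-L,L]$, so $\pm(\|u_0\|_\infty+Lt)$ are respectively super- and subsolutions; comparison then forces $|u|\leq\|u_0\|_\infty+Lt$, which in particular makes \eqref{BC} trivially satisfied.

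For existence I would approximate $\Omega$ from the inside. Put $\Omega_n:=\{x\in\Omega:d(x)>1/n\}$; for $n$ large these have $C^2$ boundary, and by \eqref{sell} the operator is uniformly elliptic on $\overline{\Omega}_n$. On each cylinder $\Omega_n\times(0,+\infty)$ I would solve a Cauchy--Dirichlet problem with lateral data $\varphi_n$ chosen smooth, bounded, and compatible with $u_0$ at the parabolic corner (for instance, a smooth bounded extension of $u_0$ restricted to $\partial\Omega_n$). Existence of a classical $C^{2+\alpha,1+\alpha/2}$ solution $u_n$ follows from Krylov's theory for Bellman equations with uniformly elliptic, sufficiently regular coefficients, using \eqref{reg3}--\eqref{reg2}. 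Comparison against $\pm(\|u_0\|_\infty+Lt)$ gives a uniform $L^\infty$ bound on $u_n$, and interior Krylov--Safonov and Evans--Krylov estimates then provide uniform $C^{2+\alpha,1+\alpha/2}$ bounds on compact subsets of $\Omega\times(0,+\infty)$. A diagonal subsequence converges in $C^{2,1}_{\mathrm{loc}}$ to a classical interior solution $u$ of the PDE, still bounded by $\|u_0\|_\infty+Lt$, hence satisfying \eqref{BC}.

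The hard part will be achieving continuity up to $\{t=0\}$, which is what places $u\in C(\Omega\times[0,+\infty))$ and matches $u(\cdot,0)=u_0$. I would handle this in two stages: first, in the smooth case $u_0\in C^2(\overline\Omega)$, classical space-time barriers of the form $u_0(x)\pm Ct$ with $C$ depending on $\|l\|_\infty$, $\|b\|_\infty$, $\|a\|_\infty$, $\|u_0\|_{C^2}$ give equicontinuity of $(u_n)$ down to $t=0$ on compact subsets of $\Omega$; second, the general case $u_0\in C(\Omega)\cap L^\infty(\Omega)$ would follow by uniformly approximating $u_0$ on compacts by smooth functions $u_0^\varepsilon$, using the comparison principle to transfer $L^\infty$-stability from $u^\varepsilon$ to $u$. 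A secondary concern — whether the limit depends on the choice of lateral data $\varphi_n$ — is settled a posteriori by the uniqueness part: every such limit is a bounded smooth solution satisfying \eqref{BC}, hence coincides with the unique solution guaranteed by Lemma~\ref{lem:comparison}.
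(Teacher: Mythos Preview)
Your overall architecture matches the paper's: uniqueness from Lemma~\ref{lem:comparison}; existence by exhausting $\Omega$ with uniformly elliptic subdomains, solving Cauchy--Dirichlet problems there, getting a uniform $L^\infty$ bound from the constant barriers $\pm(\|u_0\|_\infty+\|l\|_\infty t)$, and passing to the limit via interior Schauder-type estimates and a diagonal argument. The paper cites Lieberman for the approximating problems and Tian--Wang for interior $C^{2+\theta,1+\theta/2}$ bounds, but your Krylov/Evans--Krylov route is an acceptable substitute.

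The place where your sketch has a genuine gap is the attainment of the initial datum in the non-smooth case. Your plan is to first treat $u_0\in C^2(\overline\Omega)$ via the barriers $u_0(x)\pm Ct$ and then reach general $u_0\in C(\Omega)\cap L^\infty(\Omega)$ ``by uniformly approximating $u_0$ on compacts by smooth functions $u_0^\varepsilon$, using the comparison principle to transfer $L^\infty$-stability''. But the comparison you invoke only yields $\|u-u^\varepsilon\|_\infty\le\|u_0-u_0^\varepsilon\|_{L^\infty(\Omega)}$, and a merely continuous bounded $u_0$ need not be uniformly approximable by smooth functions on all of $\Omega$ (think of $u_0(x)=\sin(1/d(x))$). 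Local uniform approximation is not enough, since the comparison must hold on $\Omega_n$ for all $n$, and $\overline\Omega_n\uparrow\Omega$. Also note that you cannot yet apply Lemma~\ref{lem:comparison} to $u$ itself at this stage, because continuity up to $t=0$ is precisely what you are trying to prove; the comparison has to be run at the level of the $u_n$.

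The paper avoids this difficulty by building, for each fixed $x_0\in\Omega$, a \emph{one-sided} barrier that dominates $u_0$ globally while touching it at $x_0$: essentially
\[
\max_{|z-x_0|\le\varepsilon}u_{0}(z)+\frac{2\|u_{0}\|_\infty}{\varepsilon^2}|x-x_0|^2+Ct,
\]
and the analogous lower barrier. These are super/subsolutions of the approximating problems on every $\Omega_n$ (they also dominate the lateral data), so standard comparison on $\Omega_n$ gives bounds on $u_n(x_0,t)$ that survive $n\to\infty$ and then $t\to0$, $\varepsilon\to0$, using only continuity of $u_0$ at $x_0$. Your two-stage scheme can be repaired along these lines, but only if in stage two you replace ``approximate $u_0$ uniformly on compacts'' by ``for each $x_0$ find smooth $u_0^{\varepsilon,\pm}$ with $u_0^{\varepsilon,-}\le u_0\le u_0^{\varepsilon,+}$ on all of $\Omega$ and $u_0^{\varepsilon,\pm}(x_0)\to u_0(x_0)$''; at that point you are effectively reproducing the paper's local-barrier argument.
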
 


\begin{proof} 
We start by proving existence and interior regularity. Let $\O^n:=\O\setminus 
\overline{\O}_{1/n}$ (according to definition \eqref{odelta}), for $n$ 
sufficiently large, so that $\O_n$  is smooth. Let $\xi$ be a standard mollifier 
with support contained in the unit ball and $\xi_n(x):=n^N\xi(n x)$ for $n\in\N$. 
We define $u_{0,n}:= u_0\ast \xi_n$ and consider  the following Cauchy-Dirichlet 
problem 
\begin{equation}\label{CauchyDirichlet} 
\begin{cases}
v_t+H [v]=0, & x\in\Omega_n,\ t>0\\
v(x,t)=u_{0,n}(x), & x \in \partial \Omega_n, t > 0\\
v(x,0)=u_{0,n}(x), & x \in \Omega_n 
\end{cases}
\end{equation} 
By \cite[Theorem 14.18]{Lieb}, problem 
\eqref{CauchyDirichlet} admits a unique solution $u_n \in C^{2,1}(\Omega_n 
\times (0, +\infty))\cap C (\overline{\Omega}_n \times[0,+\infty))$. 

Let us now fix $T>0$ and a compact set $Q \subset \Omega \times (0,T)$.
Then $Q\subset\Omega_n\times(0,T)$ for $n$ larger than some $\overline{n}$.
Thanks to our assumptions and a covering argument, 
we can apply Theorem 1.1 in \cite{tw} (see also Remark 1.1 parts (a) and (b) for 
the regularity issues regarding $u$ and $H$ respectively) in order to see that 
there exist some constants $\theta\in (0,1]$ and $C>0$ not depending on $n$ 
such that
\begin{equation}\label{stimaxun}
\forall n>\overline{n},\quad
\| D^{2} u_n \|_{C^{\theta,\theta/2}(Q)}\leq C.
\end{equation}
%
Notice that in principle $C$ depends also on $n$ through the uniform bound $\| u_n
\|_{L^{\infty} (\Omega_n\times [0,T])}$, but we can actually substitute
this bound 
 with the uniform bound $\|u_0\|_{L^{\infty} (\Omega)}$, independent 
of $n$, because $\pm \| 
u_{0,n} \|_{L^{\infty} (\Omega_n)} \pm \|l\|_\infty t$ are sub/super solutions
of \eqref{CauchyDirichlet} and thus the standard comparison principle yields
\begin{equation}\label{boundedness}\| u_n \|_{L^{\infty} (\Omega_n\times 
[0,T])} 
\leq   \| u_{0,n} \|_{L^{\infty} (\Omega_n)}+\|l\|_\infty T \leq   \| u_0 
\|_{L^{\infty} (\Omega)}+\|l\|_\infty T.\end{equation}
Now, from \eqref{CauchyDirichlet}, \eqref{stimaxun} and the regularity of the
coefficients, it follows that the $(\partial_t u_n)_{n>\overline{n}} $ are 
uniformly
H\"older-continuous in $Q$ in both the $x$, $t$ variables.
%
%
The Ascoli-Arzel\`a theorem eventually implies that there is a subsequence of 
$(u_n)_{n\in\N}$ converging in $C^{2+\zeta,1+\zeta/2}(Q)$, $\zeta < 
\theta$, to a function $u$ satisfying the first equation of \eqref{Cauchy} in 
$Q$. Finally, by a diagonal argument, we find a subsequence of 
$(u_n)_{n }$ for which this convergence holds true in any compact subset of
$\Omega_n\times(0,T)$.
Notice that, the limit being just local, we lose the information about 
the boundary and the initial behavior of the solution. 
Observe nevertheless that by \eqref{boundedness}, we get that $u$ is bounded in $\Omega\times [0,T]$. 

We claim now that $u\in C(\Omega\times [0,+\infty))$ and satisfies 
$u(x,0)=u_0(x)$, that is, it solves~\eqref{Cauchy}. 
Fix $x_0\in \Omega$. For $0<\eps<\dist(x_0,\partial\Omega)$ consider 
the function $u_{0,n}^\eps\in C(\Omega)$ defined by 
\[u_{0,n}^\eps(x)=\max_{|z-x_0|\leq \eps}
u_{0,n}(z)+\frac{2}{\eps^2}|x-x_0|^2\|u_{0,n}\|_\infty.\] 
We compute
\[H[u_{0,n}^\eps]\geq  -\frac{4\|u_{0,n}\|_\infty}{\eps^2}\left(\|b\|_\infty 
\diam(\om)+N\|a\|_\infty\right)-\|l\|_\infty .\]
Then, we take $M\geq  \|b\|_\infty 
\diam(\om)+N\|a\|_\infty$ and define the function 
\[u_{n}^\eps(x,t)=u_{0,n}^\eps(x)+\frac{4M}{\eps^2}\|u_{0,n}\|_\infty t +\|l\|_\infty t.\] 
It is easy to check, using our choice of $M$ and noticing that 
$u_{n}^\eps(x,t)\geq u_{0,n}^\eps(x)\geq u_{0,n}(x)$ in $\overline{\O}_n \times
[0,T]$, that $u_{n}^\eps(x,t)$ is a supersolution to \eqref{CauchyDirichlet}.
Then, by  comparison, we get
\[u_n(x,t)\leq u_{n}^\eps(x,t) \qquad x\in \Omega_n,\ t\geq 0. \] 
Computing the previous inequality at $x=x_0$ and letting $n\to +\infty$, we obtain 
\[ u(x_0,t)\leq \max_{|z-x_0|\leq \eps}
u_{0}(z)+\frac{4M}{\eps^2}\|u_{0}
\|_\infty t+\|l\|_\infty t \qquad \ t\geq 0.\] Now, letting $t\to 0$, we get
\[\limsup_{t\to 0}u(x_0,t)\leq \max_{|z-x_0|\leq \eps}
u_{0}(z).\] Finally letting $\eps\to 0$, the continuity of $u_0$ yields
\[\limsup_{t\to 0}u(x_0,t)\leq
u_{0}(x_0).\] 

Arguing in an analogous way, with the function 
\[u_{n}^\eps(x,t)=\min_{|z-x_0|\leq \eps}
u_{0,n}(z)-\frac{2}{\eps^2}|x-x_0|^2\|u_{0,n}\|_\infty-\frac{4M}{\eps^2}\|u_{0,n}\|_\infty t-\|l\|_\infty t\]
we also obtain  that  \[\liminf_{t\to 0}u(x_0,t)\geq
u_{0}(x_0).\] 
We conclude by the arbitrariness of $x_0$. Finally, uniqueness follows from
Lemma \ref{lem:comparison}.
\end{proof}



\section{Boundary behavior of the solution.} \label{bdry}

We investigate now the behavior of the solution $u$  to 
\eqref{Cauchy} at the boundary of $\O$. 
\begin{proposition}\label{estimateboundary} Let $u$ be the solution to 
\eqref{Cauchy}, \eqref{BC} provided by Theorem \ref{escauchy}.  
Then for every $\rho\in (0, 1-\gamma)$, there exists $\bar\delta\in (0,1)$ such  
that for $\delta<\bar\delta$ it holds 
\begin{equation}\label{est1} 
\forall \; x\in\O_\delta,\ t\geq 1,\quad-\delta^\rho+d(x)^\rho+
\min_{(\partial \O_\delta\setminus \partial\O) \times [0,t]}u 
\leq u(x,t)\leq \delta^\rho-d(x)^\rho+\max_{(\partial \O_\delta\setminus
\partial\O)\times [0,t]}u.\end{equation} 
\end{proposition}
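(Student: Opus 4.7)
The plan is to dominate $u$ in the strip $\Omega_\delta$ by constructing explicit classical super- and sub-solutions of the form $\pm(\delta^\rho - d(x)^\rho) + \mathrm{const} + L(1 - s/t)$, and then to conclude by a parabolic comparison principle adapted (as in Lemma \ref{lem:comparison}) to handle the unspecified behavior at $\partial\Omega$. I describe the argument for the upper bound; the lower bound follows by a symmetric construction. Set $\phi(x) := \delta^\rho - d(x)^\rho$, so $\phi \geq 0$ in $\Omega_\delta$, $\phi = 0$ on $\{d = \delta\}$, and $\phi = \delta^\rho$ on $\partial\Omega$. Since $F$ is invariant under addition of constants, Lemma \ref{barrier} yields $F[\phi] \geq M$ in $\Omega_\delta$ for any prescribed $M > 0$ and $\delta$ small enough depending on $M$, while the elementary inequality $H[u] \geq F[u] - \|l\|_\infty$ gives $H[\phi] \geq M - \|l\|_\infty$.

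Fix $t \geq 1$, set $K := \|u_0\|_\infty + \|l\|_\infty t$ (which bounds $\|u\|_{L^\infty(\Omega \times [0,t])}$ by \eqref{boundedness}), $L := 2K$, and $M^* := \max_{(\partial\Omega_\delta \setminus \partial\Omega) \times [0,t]} u$, and define
\[V(x,s) := \phi(x) + M^* + L(1 - s/t), \qquad (x,s) \in \Omega_\delta \times [0,t].\]
Then $V_s + H[V] = -L/t + H[\phi] \geq M - L/t - \|l\|_\infty$. The crucial observation is that $L/t \leq 2\|u_0\|_\infty + 2\|l\|_\infty$ uniformly in $t \geq 1$, so $M$ (and hence the threshold $\bar\delta$ delivered by Lemma \ref{barrier}) can be chosen depending only on $\|u_0\|_\infty$ and $\|l\|_\infty$, making $V$ a classical supersolution of $u_s + H[u] = 0$. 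Moreover, by construction $V \geq u$ on $\{d = \delta\} \times [0,t]$ (where $\phi = 0$) and at $s = 0$ (where the additive $L$ absorbs $u_0$).

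To deduce $u \leq V$ throughout $\Omega_\delta \times [0,t]$, I follow the scheme of Lemma \ref{lem:comparison}. The obstruction is that neither function has a prescribed value on $\partial\Omega$; however, $u$ is globally bounded by Theorem \ref{escauchy}, and Lemma \ref{liap} combined with \eqref{HF} shows that $V + \eps d(x)^{-\lambda}$ is, for each $\eps > 0$, a strict supersolution that blows up at $\partial\Omega$. Hence one can find $\delta'' \in (0, \delta)$ such that $V + \eps d^{-\lambda} > u$ on $\{d = \delta''\} \times [0,t]$. The standard parabolic maximum principle in the cylinder $(\Omega_\delta \setminus \Omega_{\delta''}) \times [0,t]$ then yields $V + \eps d^{-\lambda} \geq u$ there, and letting $\delta'' \to 0$ followed by $\eps \to 0$ gives $V \geq u$ in $\Omega_\delta \times [0,t]$. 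Evaluating at $s = t$ produces the upper bound in \eqref{est1}. The lower bound is obtained by the symmetric subsolution $v(x,s) := -\phi(x) + m_* - L(1 - s/t)$, with $m_* := \min_{(\partial\Omega_\delta \setminus \partial\Omega) \times [0,t]} u$, and the perturbation $v - \eps d^{-\lambda}$.

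I expect the main difficulty to be twofold. First, the comparison at $\partial\Omega$ where no boundary data is available: this is handled by the Lyapunov barrier $\eps d^{-\lambda}$ from Lemma \ref{liap}, which ultimately relies on the degeneracy assumption \eqref{invariance}. Second, and more subtly, the choice of a single threshold $\bar\delta$ valid for all $t \geq 1$: the time-dependent correction $L(1 - s/t)$ is unavoidable because the max in \eqref{est1} does not involve $u_0$ in the interior of $\Omega_\delta$, yet this correction introduces the unfavorable term $-L/t$ in the supersolution inequality; the hypothesis $t \geq 1$ is precisely what keeps $L/t$ bounded and thus permits a uniform choice of $M$ and hence of $\bar\delta$.
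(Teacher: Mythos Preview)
Your proposal is correct and follows essentially the same strategy as the paper: construct a supersolution of the form $(\delta^\rho - d^\rho) + M^* + (\text{linear-in-}s\text{ correction vanishing at }s=t)$ using Lemma~\ref{barrier}, then obtain comparison in $\Omega_\delta\times[0,t]$ via a Lyapunov perturbation $\eps\, d^{-\lambda}$ from Lemma~\ref{liap} to handle the missing data on $\partial\Omega$. The only differences are cosmetic: the paper uses the smaller time correction $2(1-s/t)\|u_0\|_\infty$ instead of your $L(1-s/t)$ with $L=2K$, and it builds the $\eps\, d^{-1}$ term directly into the comparison function, invoking Lemma~\ref{lem:comparison} as a black box rather than re-deriving its argument in the strip.
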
 

\begin{proof} By Lemma \ref{barrier} there exists $\bar\delta>0$ such that, for 
$x\in \O_{\bar\delta}$, 
$$F[1-d(x)^\rho]\geq M,\quad F[d(x)^\rho-1]\leq-M,\quad 
\text{ with }\;M:=2\|u_0\|_\infty+\|l\|_\infty.$$
Owing to Lemma \ref{liap}, up to reducing $\bar\delta$ if needed, we also 
have that $F[-d(x)^{-1}]\leq 0$
and  $F[d(x)^{-1}]\geq 0$ in $\O_{\bar\delta}$. Fix $t\geq1$ and, for 
$\delta<\bar\delta$ and $\eps>0$, let us define in $\O_{\bar\delta}\times[0,t]$  the 
following function:
\[v^\eps(x,s):=\max_{ \partial \O_\delta\setminus \partial\O\times [0,t]}u+ 
(1-d(x)^\rho)-1+\delta^\rho+\eps d(x)^{-1}-2 (s-t)\frac{\|u_0\|_\infty}{t}.\]
This is a $C^{2,1}$ function which, by \eqref{HF}, satisfies in 
$\O_\delta\times(0,t]$,
\[v^\eps_s+H[v^\eps]\geq  -2\frac{\|u_0\|_\infty}{t}+F[1-d(x)^\rho]-\|l\|_\infty
-\eps F[-d(x)^{-1}]> -2\|u_0\|_\infty +M-\|l\|_\infty=0.  \]
Moreover $v^\eps$ fulfills the boundary condition 
\[\liminf_{x\to \partial\O} v^\eps(x,s)d(x)=\eps\quad \text{ uniformly 
in }s\in[0,t], \]  
and, for $x\in\partial\O_\delta\setminus \partial \Omega$ and $s\in[0,t]$, 
\[v^\eps(x,s)=\max_{ \partial\O_\delta\setminus \partial \Omega\times
[0,t]}u+\eps \delta^{-1}+2 
(t-s)\frac{\|u_0\|_\infty}{t}\geq \max_{ \partial\O_\delta\setminus \partial
\Omega\times [0,t]}u\geq 
u(x,s),\]
as well as the initial condition
\[v^\eps(x,0)\geq \max_{ \partial\O_\delta\setminus \partial \Omega\times [0,t]}u+2\|u_0\|_\infty \geq 
\max_{ \partial\O_\delta\setminus \partial \Omega}u_0+2\|u_0\|_\infty\geq 
u_0(x).\]
Then by the comparison principle given by Lemma \ref{lem:comparison} applied
with 
$\lambda =1$ (recall that $u$ is bounded in $\om\times[0,t]$) we obtain
\[u(x,s)\leq v^\eps(x,s)\qquad \text{for }x\in\O_\delta,\ s\in [0,t], \] 
which, computed at $s=t$ and with $\eps\to 0$, eventually implies the 
second inequality in \eqref{est1}.

To obtain the first inequality in \eqref{est1} one argues analogously: define
the function \[w^\eps(x,s)=\min_{ \partial\O_\delta\setminus \partial \O\times
[0,t]}u- 1+d(x)^\rho+1-\delta^\rho-\eps   d(x)^{-1}+2
(s-t)\frac{\|u_0\|_\infty}{t},\]
and, after observing that \[w^\eps_s+H[w^\eps]\leq  2\frac{\|u_0\|_\infty}{t}+H[
d(x)^\rho-1]+\eps F[-d(x)^{-1}]\leq 2\|u_0\|_\infty-M\leq 0, \] one concludes
as before.
\end{proof}

We now turn to the ergodic problem 
 \begin{equation}
 \label{cell}   
H[\chi]=c,\qquad   \text{in }\O.
\end{equation}
The solution $\chi$ will be used to derive the large time behavior for the
Cauchy problem \eqref{Cauchy}.

It has been proved in  \cite{bcr} that, under the same standing assumptions as
here, there 
exists a unique constant $c$ for which \eqref{cell}  admits a 
solution $\chi\in C^{2}(\O)$ satisfying \eqref{BC}. Moreover $\chi$ is unique up 
to additive constants and actually satisfies the stronger condition
\begin{equation}
\label{log}
\chi(x)=o(-\log(d(x)))
 \quad \text{ as } \; x\to \partial\Omega. \end{equation}
However, such condition is not sufficient for our purpose, but we need
boundedness. This is provided by the following. 
\begin{proposition}\label{chibounded}
Let $c\in\R$ and $\chi\in C^2(\O)$ be the solution to \eqref{cell} satisfying 
\eqref{log}. Then $\chi\in L^\infty(\O)$. In particular, for every 
$\rho\in(0,1-\gamma)$, there exists $\bar\delta\in (0,1)$ such that, for every  
$\delta\leq\bar\delta$ and $x\in\O_\delta$, there holds
\begin{equation}\label{estchi} 
\min_{\partial\O_\delta\setminus \partial\O }\chi -\delta^\rho+d(x)^\rho\leq 
\chi(x )\leq \max_{\partial\O_\delta\setminus\partial \O}\chi+ 
\delta^\rho-d(x)^\rho.\end{equation}
\end{proposition}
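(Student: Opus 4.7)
The plan is to mimic the barrier construction used in the proof of Proposition \ref{estimateboundary}, but now adapted to the \emph{stationary} equation \eqref{cell}, and then to deduce the global bound from the local inequality \eqref{estchi}. Since $\chi\in C^2(\O)$, it is continuous on the compact set $\O\setminus\O_\delta\subset\O$ and hence bounded there; in particular $\max_{\partial\O_\delta\setminus\partial\O}\chi$ and $\min_{\partial\O_\delta\setminus\partial\O}\chi$ are finite numbers. Once \eqref{estchi} is proved, boundedness of $\chi$ on $\O_\delta$ follows from the boundedness of $d^\rho$, and combining with the interior bound on $\O\setminus\O_\delta$ gives $\chi\in L^\infty(\O)$.

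Fix $\rho\in(0,1-\gamma)$. Using Lemma~\ref{barrier} with $M:=|c|+\|l\|_\infty+1$, together with Lemma~\ref{liap} applied with $\lambda=1$ and $M=0$, I choose $\bar\delta\in(0,1)$ so that in $\O_{\bar\delta}$ there hold simultaneously $F[1-d^\rho]\geq M$, $F[d^\rho-1]\leq -M$, and $F[-d^{-1}]\leq 0$. For $\delta\leq\bar\delta$ and $\eps>0$, I set
$$v^\eps(x):=\max_{\partial\O_\delta\setminus\partial\O}\chi+\delta^\rho-d(x)^\rho+\eps\,d(x)^{-1},\qquad x\in\O_\delta.$$
Since additive constants do not affect $H$, applying \eqref{HF} (in the form $H[u-w]\geq H[u]-F[w]$ with $u=1-d^\rho$ and $w=-\eps d^{-1}$) together with the obvious inequality $H[u]\geq F[u]-\|l\|_\infty$, I would compute
$$H[v^\eps]\geq H[1-d^\rho]-\eps F[-d^{-1}]\geq F[1-d^\rho]-\|l\|_\infty\geq M-\|l\|_\infty>c,$$
so $v^\eps$ is a strict classical supersolution of \eqref{cell} in $\O_\delta$.

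For the comparison step, observe that $v^\eps\geq\chi$ on $\partial\O_\delta\setminus\partial\O$ by construction, and that the term $\eps d^{-1}$ dominates the growth $\chi=o(-\log d)$ guaranteed by \eqref{log}, so $v^\eps-\chi\to+\infty$ as $x\to\partial\O$. Hence, for every sufficiently small $\delta'\in(0,\delta)$, we also have $v^\eps>\chi$ on $\{d=\delta'\}$. Suppose $w:=\chi-v^\eps$ attained a positive maximum at some interior point $\bar x\in\O_\delta\setminus\O_{\delta'}$; then $Dw(\bar x)=0$ and $D^2 w(\bar x)\leq 0$ would give $D\chi(\bar x)=Dv^\eps(\bar x)$ together with $D^2\chi(\bar x)\leq D^2 v^\eps(\bar x)$. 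The monotonicity of $-b(x,\alpha)\cdot D\,\cdot\,-\tr(a(x,\alpha)D^2\,\cdot\,)$ with respect to the Hessian (which uses $a\geq 0$), and taking the supremum over $\alpha$, would then force $c=H[\chi](\bar x)\geq H[v^\eps](\bar x)>c$, a contradiction. Therefore $\chi\leq v^\eps$ in $\O_\delta\setminus\O_{\delta'}$; letting $\delta'\to 0$ and then $\eps\to 0$ delivers the upper half of \eqref{estchi}.

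The lower half is obtained by a symmetric argument with the strict subsolution
$$w^\eps(x):=\min_{\partial\O_\delta\setminus\partial\O}\chi-\delta^\rho+d(x)^\rho-\eps\,d(x)^{-1},$$
which, by the analogous application of \eqref{HF} with $H[u-w]\leq H[u]+F[-w]$ and the bound $F[d^\rho-1]\leq -M$ from Lemma~\ref{barrier}, satisfies $H[w^\eps]\leq -M+\|l\|_\infty<c$ in $\O_\delta$. The main obstacle lies in the comparison step: since our operator is only degenerate elliptic, one might expect to need a delicate elliptic comparison principle on the annular domain $\O_\delta\setminus\O_{\delta'}$. The point, however, is that the \emph{strict} inequality $H[v^\eps]>c$ enjoyed by the barrier trivialises the issue, making the elementary classical maximum-principle argument above sufficient without invoking any refined comparison result for fully nonlinear degenerate elliptic equations.
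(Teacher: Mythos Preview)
Your proof is correct, and it takes a genuinely different route from the paper's. The paper does not argue directly on the stationary equation: instead it observes that $\chi(x)-ct$ is an \emph{entire} solution of the parabolic equation $u_t+H[u]=0$, builds time-dependent barriers of the form
\[
v^\eps(x,s)=\max_{\partial\O_\delta\setminus\partial\O}\chi+\delta^\rho-d(x)^\rho+\eps d(x)^{-1}-2|c|s
\]
on an interval $[t_\eps,0]$ with $t_\eps<0$ chosen so that the initial condition is met, and then invokes the parabolic comparison principle of Lemma~\ref{lem:comparison}. Your approach stays elliptic throughout: you use the same spatial barrier but without the time term, and you replace the parabolic comparison by the elementary classical maximum-principle argument at an interior maximum of $\chi-v^\eps$ on the annulus $\O_\delta\setminus\O_{\delta'}$. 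This works precisely because the barrier is a \emph{strict} supersolution ($H[v^\eps]>c$), so that degenerate ellipticity ($a\geq0$) alone suffices and no strong maximum principle is needed. The paper's route has the advantage of reusing machinery already in place; yours is shorter, self-contained, and avoids introducing an artificial time variable and an initial time $t_\eps$ depending on $\eps$.
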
 
 
\begin{proof}
First of all notice that $\chi(x)-ct$ is an entire solution to
the 
first equation in \eqref{Cauchy}.
Set $M=2|c|+\|l\|_\infty$ and consider the associated quantity $ \delta$ given 
by Lemma \ref{barrier}. Then 
$$-H[1-d(x)^\rho]\leq H[d(x)^\rho-1]\leq-2|c|\quad\text{for }x\in \O_{ 
\delta}.$$ 
Possibly decreasing $ \delta$, we can also assume  that $-F[ 
d(x)^{-1}]\leq F[-d(x)^{-1}]\leq 0$ in $\O_{ \delta}$ by Lemma 
\ref{liap}.

Take now $t_\eps<0$ such that  \[-|c| t_\eps\geq   
\max_{x\in\overline\O_\delta} (\chi(x)-\eps 
d(x)^{-1})-\max_{\partial\O_\delta\setminus\partial \O} \chi. \] Note that the 
first maximum above exists due to the fact that $\chi$ satisfies \eqref{log}.

For  $\eps>0$ define in $\ol{\O}_\delta\times[t_\eps,0]$  the function
\[v^\eps(x,s)=\max_{ \partial\O_\delta\setminus\partial\O}\chi+ (1-d(x)^\rho)-1+\delta^\rho+\eps  
d(x)^{-1}- 2|c|s.\]
Then $v^\eps$ is in $C^{2,1}(\O_\delta\times[t_\eps,0])$ and satisfies the 
boundary condition \[\liminf_{x\to \partial\O} v^\eps(x,t)d(x)=\eps\geq 0\]  
uniformly in $t\in [t_\eps,0]$. Moreover 
\[v^\eps_s+H[v^\eps]\geq  -2|c| +H[1-d(x)^\rho]-\eps F[-d(x)^{-1}]\geq  0.  \]
Finally at $x\in \partial\O_\delta\setminus\partial\O$, for $s\in[t_\eps,0]$, 
\[v^\eps(x,s)=\max_{ \partial\O_\delta\setminus\partial \O}\chi+\eps 
\delta^{-1}-2|c|s\geq \max_{ \partial\O_\delta\setminus\partial \O 
}\chi-|c|s\geq  \chi(x)-cs \] and for all $x\in \overline{\O}_\delta$ 
\[v^\eps(x,t_\eps)\geq \max_{\partial\O_\delta\setminus\partial \O }\chi+\eps d(x)^{-1}-2|c| t_\eps\geq \chi(x)-ct_\eps\] by our choice of $t_\eps$. 
Then by the parabolic comparison principle (see Lemma \ref{lem:comparison}), we get that  for every
$\eps>0$
\[\chi(x)-cs\leq v^\eps(x,s)\qquad x\in\O_\delta,\ s\in [t_\eps, 0]. \] 
Computing the previous inequality at $s=0$, and letting $\eps \to 0$, we get the 
right hand side of the inequality \eqref{estchi}.

The other side is obtained with similar arguments, by considering the function
\[w^\eps(x,s)=\min_{ \partial\O_\delta\setminus\partial 
\O}\chi-(1-d(x)^\rho)+1-\delta^\rho-\eps  d(x)^{-1}+ 2|c|s \] for $s\in 
[t^\eps,0]$, where $t^\eps<0$ satisfies \[|c|t^\eps\leq 
\min_{x\in\overline\O_\delta} (\chi(x)+\eps 
d(x)^{-1})-\min_{\partial\O_\delta\setminus\partial \O}\chi. \]

\end{proof}
\begin{remark}\upshape \label{regolaritachi}
In dimension $N=1$ condition \eqref{estchi} implies that $\chi$ is
H\"older-continuous in $\O$ with H\"older exponent up to $1-\gamma$. 
Indeed, if $\Omega=(a,b)$ then the sets $\om_\delta$ are composed by two
disjoint intervals and this allows one to split the estimates \eqref{estchi}
into the following two sets of estimates:
%
\[ 
\forall\; a<x<y<a+\overline\delta,\quad  
\chi(y) - (y-a)^\rho +(x-a)^\rho\leq \chi(x)\leq \chi(y)+
(y-a)^\rho-(x-a)^\rho,
\]
\[ 
\forall\; b-\overline\delta<y<x<b,\quad  
\chi(y) -(b-y)^\rho+(b-x)^\rho\leq \chi(x)\leq  \chi(y)+
(b-y)^\rho-(b-x)^\rho. \]
Combining this with standard interior regularity for elliptic equations, we get
that $\chi\in C^{\rho}(\O)$ for every $\rho\in (0, 1-\gamma)$.

 In general we cannot expect more than
H\"older regularity for $\chi$ under the hypothesis \eqref{invariance}. In
particular, if the constant $\gamma$ there is strictly positive, $\chi$ may not
be Lipschitz-continuous in $\Omega$, as shown by the following example.
Let $\Omega$ be the interval $(0,1)$, the set $A$ to be a singleton,  $a$ to be
a smooth function
such that $a>0$ for $x\in (0,1)$,  $a(x)=x^{2\beta} $ for $x$ in a neighborhood
of $0$ and $a(x)=(1-x)^{2\beta}$ for $x$ in a neighborhood of $1$ with
$2\beta>1$, $b$ be a smooth function  such that $b(0)=b(1)=0$, and $l$ be a
smooth function such that $l(0)\not=l(1)$.    
Assume moreover that there exist $\delta, k, \gamma$, with  $0<\gamma<2\beta-1$ such that 
\[\forall \; x\in (0, \delta),\quad b(x)\geq k x^\gamma \qquad \text{ and 
}\qquad
\forall \; x\in (1-\delta, 1),\quad  b(x)\leq -k (1-x)^\gamma.\]   Note that
this condition is exactly condition \eqref{invariance}, and that is compatible
with the assumption $b(0)=b(1)=0$, since $\gamma>0$. 
The solution $\chi$ of \eqref{cell} solves \[\chi''(x)=
a(x)^{-1} (-b(x)\chi'(x)-l(x)-c), \qquad x\in (0,1).\] Define $G(x)=
-b(x)\chi'(x)-l(x)-c$. 
Assume by contradiction that $\chi$ is Lipschitz-continuous in $(0,1)$, so in
particular there exists $C>0$ such that $|\chi'|\leq C$. 
By our assumptions on the coefficients,  \[\lim_{x\to 0^+} G(x)= -l(0)-c\neq \lim_{x\to 1^{-}} G(x)=-l(1)-c.\]  Then necessarily,  either $\lim_{x\to 0^+} G(x)\neq 0$ or $\lim_{x\to 1^{-}} G(x)\neq 0$. 
Assume to fix the ideas that $\lim_{x\to 0^+} G(x) \neq 0$ (the other case is
completely analogous), then
in a neighborhood of $0$ we get that 
$\chi''(x) \approx   a(x)^{-1}=x^{-2\beta}$. So $\chi'(x) \approx  
x^{-2\beta+1}+C$, and this is in
contradiction with the Lipschitz-continuity of $\chi$ because $1-2\beta<0$.
\end{remark}

\section{Convergence result} \label{conv}

In this last section we show that solutions of the Cauchy problem share the
same large time behavior. This will imply our main result.

\begin{theorem}\label{LTB}
 Let $u_1,u_2\in C^{2,1}(\Omega\times (0, +\infty))\cap 
C(\Omega\times [0,+\infty))$ be two solutions to the first equation in
\eqref{Cauchy} 
which satisfy the boundary control \eqref{BC}, 
such that 
$u_1-u_2$ is bounded and \begin{equation}\label{assnuova}\forall\tau>0,
\ K\subset\subset\O, \ \ \ \exists C>0,\ \theta\in (0, 1], \quad
\|u_1-u_2\|_{C^{2+\theta,1+\theta/2}(K\times(\tau,+\infty))}\leq C.\end{equation}
Then, as $t\to+\infty$, $u_1-u_2$ converges to a constant uniformly 
in $\O$.
\end{theorem}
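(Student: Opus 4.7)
Set $w := u_1 - u_2$. Since $H[u] = \sup_\alpha(\cdot)$ is sublinear in $u$, the elementary identity $\sup_\alpha f - \sup_\alpha g \le \sup_\alpha(f-g)$ gives $H[u_1] - H[u_2] \le F[w]$ and $H[u_2] - H[u_1] \le F[-w]$, so using $u_{i,t} + H[u_i] = 0$ we obtain classically
\[
-F[w] \le w_t \le F[-w] \qquad \text{in } \Omega \times (0, \infty).
\]
In particular both $w$ and $-w$ are classical supersolutions of $v_t + F[v] = 0$. The first observation is that $M(t) := \sup_\Omega w(\cdot, t)$ is non-increasing and $m(t) := \inf_\Omega w(\cdot, t)$ is non-decreasing: to see this for $M$, apply Lemma~\ref{lem:comparison} on the shifted cylinder $[t_1, T]$ to the pair $u_1$ and $u_2 + M(t_1)$, the latter being again a solution of $u_t + H[u] = 0$ by translation invariance of $H$, with the boundary growth \eqref{BC} preserved. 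Boundedness of $w$ then gives the limits $M_\infty \ge m_\infty$, and the theorem reduces to showing $M_\infty = m_\infty$.

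The second ingredient I would prove is a boundary estimate for $w$ itself, completely analogous to Proposition~\ref{estimateboundary}: for every $\rho \in (0, 1-\gamma)$ there is $\bar\delta > 0$ such that for $\delta < \bar\delta$, $\tau \ge 0$, $t \ge \tau + 1$ and $x \in \Omega_\delta$,
\[
\min_{(\partial\Omega_\delta \setminus \partial\Omega) \times [\tau, t]} w - (\delta^\rho - d(x)^\rho) \le w(x, t) \le \max_{(\partial\Omega_\delta \setminus \partial\Omega) \times [\tau, t]} w + (\delta^\rho - d(x)^\rho).
\]
The proof is a verbatim repetition: construct the same barriers from Lemmas~\ref{liap} and~\ref{barrier} with the constant $M$ now set to $2\|w\|_\infty$, and compare as in Lemma~\ref{lem:comparison}, using the sublinearity of $F$ to ensure the difference between the barrier and $w$ satisfies $V_s + F[V] \ge 0$.

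Assume by contradiction $M_\infty > m_\infty$. Given $t_n \to +\infty$, hypothesis~\eqref{assnuova} and a diagonal Arzelà-Ascoli extraction yield a subsequence such that $w_n(x, t) := w(x, t + t_n) \to w_\infty$ in $C^{2,1}_{\mathrm{loc}}(\Omega \times \mathbb{R})$, with $w_\infty$ inheriting the differential inequalities of $w$ and satisfying $m_\infty \le w_\infty \le M_\infty$. Applying the upper half of the boundary estimate to $w_n$ on the time window $[t-1, t]$ with small $\delta$ and passing to the limit on the compact set $K_\delta := \partial\Omega_\delta \setminus \partial\Omega \subset \Omega$ gives
\[
\sup_{K_\delta \times [t-1, t]} w_\infty \ge M_\infty - \delta^\rho \qquad \text{for every } t \in \mathbb{R}.
\]
Since $F$ is uniformly elliptic on every relatively compact subdomain of $\Omega$ thanks to \eqref{sell} together with continuity of $a$, the strong minimum principle for non-negative classical supersolutions of $v_t + F[v] = 0$, applied to the non-negative function $M_\infty - w_\infty$ through a limiting procedure as $\delta \to 0$ and propagated along the connected domain $\Omega$ backward in time, forces $w_\infty \equiv M_\infty$ on $\Omega \times [-1, 0]$.

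Consequently $w_n \to M_\infty$ locally uniformly on $\Omega \times [-1, 0]$. Applying the lower half of the boundary estimate, for every small $\delta$,
\[
\inf_{\Omega_\delta} w_n(\cdot, 0) \ge \min_{K_\delta \times [-1, 0]} w_n - \delta^\rho \longrightarrow M_\infty - \delta^\rho,
\]
while $\inf_{\overline{\Omega \setminus \Omega_\delta}} w_n(\cdot, 0) \to M_\infty$ by the same local uniform convergence; hence $m(t_n) \ge M_\infty - \delta^\rho - o(1)$. Letting $n \to \infty$ and then $\delta \to 0$ yields $m_\infty \ge M_\infty$, contradicting $M_\infty > m_\infty$. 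The main obstacle is the strong minimum principle step: the operator degenerates at $\partial\Omega$, so one can only invoke it on interior compact subsets and must combine it with the boundary estimate to reach the whole domain; this relies on the interior ellipticity~\eqref{sell} and the connectedness of $\Omega$, and requires care in converting the qualitative strong-maximum conclusion into the quantitative statement $w_\infty \equiv M_\infty$ needed to close the argument.
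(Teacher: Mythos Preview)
Your overall strategy --- monotonicity of $\sup_\Omega w$ and $\inf_\Omega w$, compactness via \eqref{assnuova}, the barrier estimate near $\partial\Omega$, and the parabolic strong maximum principle --- is exactly the one the paper uses, only reorganized (you front-load the monotonicity that the paper defers to its Step~3). The ingredients are all correct.

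There is, however, a genuine gap at the point you yourself flag as the ``main obstacle''. Taking an \emph{arbitrary} sequence $t_n\to\infty$, your boundary estimate only gives
\[
\max_{(\overline{\Omega\setminus\Omega_\delta})\times[t-1,t]} w_\infty \;\ge\; M_\infty-\delta^\rho
\qquad\text{for every }t\in\R,\ \delta>0,
\]
which says $M_\infty-w_\infty$ has infimum $0$, but \emph{not} that this infimum is attained at any interior point $(x_0,t_0)\in\Omega\times\R$. Without an actual touching point the strong maximum principle cannot be invoked; the ``limiting procedure as $\delta\to0$'' you sketch does not supply one, since the near-extremal points may drift to $\partial\Omega$ or to $t=\pm\infty$.

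The paper avoids this by choosing $t_n$ more carefully: it fixes $\delta>0$, sets $m(t):=\min_{\overline{\Omega\setminus\Omega_\delta}} w(\cdot,t)$ (the \emph{interior} minimum, not the global one), and selects $t_n$ along which $m(t_n)\to\liminf_{t\to\infty} m(t)=:\tilde m$. This guarantees that the limit $\tilde w$ satisfies $\min_{(\overline{\Omega\setminus\Omega_\delta})\times\R}\tilde w=\tilde w(x_0,0)=\tilde m$ for some $x_0$ in the compact set $\overline{\Omega\setminus\Omega_\delta}$. A separate comparison with $v(x,t)=\eps(t-T-d(x)^{-1})+\tilde m$ in the strip $\Omega_\delta$ then upgrades this to $\tilde w\ge\tilde m$ on all of $\Omega\times\R$, so $(x_0,0)$ is a \emph{global} interior minimum and the strong maximum principle applies. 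Your argument can be repaired along the same lines: work with $M_\delta(t):=\max_{\overline{\Omega\setminus\Omega_\delta}}w(\cdot,t)$, pick $t_n$ realizing $\limsup_t M_\delta(t)$, and use the same strip comparison (applied to $-w_\infty$) to make the interior maximum global before invoking the strong maximum principle.
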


\begin{proof}
Consider the difference 
function $w:=u_1-u_2$, which is bounded by hypothesis. 
By Lemmas \ref{liap} and \ref{barrier}, there exist $\delta\in(0,1)$ and
$\rho\in (0, 1-\gamma)$  such that $F[-d^{-1}]\leq-1$ and $F[d^\rho-1]\leq-
2\|w\|_\infty$ in $\om_\delta$. For $t>0$ define
$$m(t):=\min_{x\in\O\setminus\O_\delta}w(x,t).$$
Let $(t_n)_{n\in\N}$ be such that 
$$\limn t_n=+\infty,\qquad
\limn m(t_n)=\liminf_{t\to+\infty}m(t)=:\tilde m\,.
$$
Our aim is to show that $w=u_1-u_2\to\tilde m$ uniformly in $\O$ as 
$t\to+\infty$.

\medskip
Step 1.{ \em $w(\cdot,\cdot+t_n)\to\tilde m$ as 
$n\to\infty$, locally uniformly in $\O\times(-\infty,0]$.}

\smallskip\noindent
By assumption \eqref{assnuova}, the functions $(w(\cdot,\cdot+t_n))_{n\in\N}$ 
and their derivatives $\de_t$, $D$, $D^2$ are locally uniformly bounded in 
$\O\times\R$. 
%
Moreover, by \eqref{HF}, $w$ 
solves $w_t +F[w]\geq 0$. Thus, as 
$n\to\infty$, $w(\cdot,\cdot+t_n)$ converges locally uniformly (up to 
subsequences) to a supersolution $\tilde w$ of the same equation in 
$\O\times\R$, which satisfies in addition
\begin{equation}\label{minw}
\min_{(\O\setminus\O_\delta)\times\R}\tilde w= 
\min_{(\O\setminus\O_\delta)\times\{0\}}\tilde w=\tilde m\,.
\end{equation}
For given $T\in\R$ and $\e>0$, the function $v$ defined by
$$v(x,t):=\e(t-T-d(x)^{-1})+\tilde m$$
satisfies $v_t +F[v]\leq 0$ in $\O_\delta\times\R$. 
Moreover, for $x\in\O_\delta$, $v(x,t)<\tilde m$ if $t\leq T$, and 
$v(x,t)<\inf\tilde w$ if $t$ is less than some $t_\e\in\R$, that we can suppose 
to be smaller than $T$. We can therefore apply the comparison principle between 
$v$ and $\tilde w$ in $\O_\delta\times[t_\e,T]$ and deduce in particular that
$$\forall \; x\in \O_\delta,\quad
\tilde w(x,T)\geq v(x,T)=-\e d(x)^{-1}+\tilde m\,.$$
By the arbitrariness of $\e$ and $T$, we then infer that $\tilde w\geq\tilde m$ 
in $\O_\delta\times\R$. Eventually, by~\eqref{minw}, $\tilde w$ 
attains its global minimum $\tilde m$ somewhere in $\O\setminus\O_\delta$ at 
time $0$.
The parabolic strong maximum principle then implies that $\tilde w=\tilde m$ 
for all $t\leq0$. We have shown that $w(\cdot,\cdot+t_n)\to\tilde m$ as 
$n\to\infty$, locally uniformly in $\O\times(-\infty,0]$. 

\medskip
Step 2.{ \em  $w(\cdot,t_n)\to\tilde m$ uniformly in $\O$ as 
$n\to\infty$.}

\smallskip\noindent
Define $m_{n,\delta}:= \min_{(y,s)\in(\O\setminus\O_\delta)\times 
[-1,0]}w(y,t_n+s)$.
Observe that by Step 1, $\lim_n m_{n,\delta}=\tilde m$. 
For given $n\in\N$ consider the function  
\[v(x,t)=m_{n,\delta}- 1+d(x)^\rho+1-\delta^\rho-\eps  d(x)^{-1}+2
t\|w\|_\infty.\]
Using \eqref{HF} and recalling our definition of $\delta$, we find that 
\[v_t+F[v]\leq 2  \|w\|_\infty+ F[d(x)^\rho-1]+\eps F[ -d(x)^{-1}]\leq 0, \qquad
x\in \O_\delta,\ t\in (-1,0). \] 
Moreover $\limsup_{x \to \partial \O} v(x,t) d(x)\leq 0$, uniformly in $t\in
(-1,0)$, and
\[\forall x\in \O_\delta,\quad v(x,-1)\leq m_{n,\delta}-2 \|w\|_\infty\leq
-\|w\|_\infty\leq w(x,t_n-1).\] 
Finally, if $d(x)=\delta$ and $t\in [-1,0]$, then $v(x,t)\leq 
m_{n,\delta}$. 
Then the standard comparison principle yields
\[\forall x\in\O_\delta,\ s\in [-1,0],\quad
w(x,t+t_n)\geq m_{n,\delta}+d(x)^\rho-\delta^\rho-\eps d(x)^{-1}+2
s\|w\|_\infty.\]Letting 
$\eps\to 0$ and computing the inequality at $s=0$, we obtain 
\[w(x,t_n)\geq m_{n,\delta}+d(x)^\rho-\delta^\rho\geq m_{n,\delta}-\delta^\rho \qquad x\in\O_\delta.\]

This implies that \[m_{n,\delta}-\delta^\rho\leq \inf_{x\in\O}w(x,t_n)\leq m_{n,\delta}.\]
So, letting $n\to +\infty$, by the arbitrariness of $\delta$ we get 
\[\liminf_n\inf_{x\in\O}w(x,t_n)=\tilde m.\]

Again by Step 1, as $n\to +\infty$ we have
\[ M_{n,\delta}:= - \max_{(y,s)\in(\O\setminus\O_\delta)\times 
[-1,0]}w(y,t_n+s)=\min_{(y,s)\in(\O\setminus\O_\delta)\times 
[-1,0]}(-w(y,t_n+s))\to -\tilde m.\]   
Hence repeating the same argument as above for $-w$ (exchanging the role of $u_1, u_2$) we get
\[\liminf_n\inf_{x\in\O}(-w(x,t_n))=-\tilde 
m,\qquad\text{i.e.}\qquad
\limsup_n\sup_{x\in\O}w(x,t_n)=\tilde m.\]
This concludes the proof of the step.



\medskip
Step 3.{ \em  $w(\cdot,t)\to\tilde m$ uniformly in $\O$ as 
$t\to+\infty$.}
 
\smallskip\noindent
Define
$$\ul m(t):=\inf_{x\in\O} w(x,t),\qquad
\ol m(t):=\sup_{x\in\O} w(x,t).$$
For any $s>0$, the function $u_2+\ul m(s)$ is a subsolution to the first 
equation 
in \eqref{Cauchy} and lies below $u_1$ at time $t=s$. Hence, Lemma 
\ref{lem:comparison} implies that $u_2+\ul m(s)\leq u_1$ for all $t\geq s$, 
from which we deduce that
$t\mapsto \ul m(t)$ is nondecreasing. Analogously, $u_2+\ol m(s)\geq u_1$ for 
all $t\geq s$, whence $t\mapsto \ol m(t)$ is nonincreasing.
It follows that
$$\lim_{t\to+\infty}\ul m(t)=\limn\ul m(t_n)=\tilde m=
\limn\ol m(t_n)=\lim_{t\to+\infty}\ol m(t).$$
This concludes the proof.
\end{proof}


\begin{corollary}\label{main}
Let $u_0\in L^\infty(\Omega)\cap C(\Omega)$ and let
$u$ be the unique solution to \eqref{Cauchy} satisfying the boundary control
\eqref{BC}.
Then there exists a constant $K$, depending only on $\|u_0\|_\infty$, such that 
\[u(x,t)+ct-\chi(x)+K\to 0 \qquad \text{as $t\to +\infty$, \
uniformly in $x\in\om$},\] where $(c,\chi)$ is the bounded solution to
\eqref{cell} normalized by $\sup\chi=0$. 
\end{corollary}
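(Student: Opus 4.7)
The plan is to invoke Theorem \ref{LTB} with the pair $u_1:=u$ (the solution furnished by Theorem \ref{escauchy}) and $u_2(x,t):=\chi(x)-ct$, and then read off the constant. A direct computation shows that $u_2$ solves the first equation of \eqref{Cauchy}, since $(u_2)_t+H[u_2]=-c+H[\chi]=-c+c=0$. The required regularity $C^{2,1}(\Omega\times(0,+\infty))\cap C(\Omega\times[0,+\infty))$ is inherited from $\chi\in C^2(\Omega)$.

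First I would verify the boundary growth hypothesis \eqref{BC}: for $u_1$ this is built into Theorem \ref{escauchy}, while for $u_2$ it is immediate because Proposition \ref{chibounded} yields $\chi\in L^\infty(\Omega)$, so $|u_2(x,t)|\,d(x)^\lambda\to 0$ locally uniformly in $t$. Next I would verify the boundedness of $w:=u_1-u_2=u+ct-\chi$. Set $A:=\sup_\Omega(u_0-\chi)$ and $B:=\inf_\Omega(u_0-\chi)$, both finite because $u_0$ and $\chi$ are bounded. The functions $\chi(x)+A-ct$ and $\chi(x)+B-ct$ are entire solutions of the first equation in \eqref{Cauchy} satisfying \eqref{BC}, and they bracket $u_0$ at $t=0$. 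Lemma \ref{lem:comparison} then gives $\chi(x)+B-ct\leq u(x,t)\leq\chi(x)+A-ct$ on $\Omega\times[0,+\infty)$, whence $\|w\|_\infty\leq\max(|A|,|B|)$, a quantity controlled by $\|u_0\|_\infty$ (and $\|\chi\|_\infty$).

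The main obstacle is the uniform interior regularity condition \eqref{assnuova}, since $u$ itself drifts linearly in $t$ and is in general unbounded. The key observation is that the shifted function $\tilde u:=u+ct$ satisfies
\[\tilde u_t+H[\tilde u]=c,\]
because $H$ depends on its argument only through spatial derivatives. Since $\tilde u=\chi+w$ with both summands bounded, $\tilde u$ is globally bounded on $\Omega\times[0,+\infty)$. Applying to $\tilde u$ the interior parabolic estimate of \cite{tw} already used in the proof of Theorem \ref{escauchy}, on compact sets $K\subset\subset\Omega$ and for $t\geq\tau>0$, produces a uniform $C^{2+\theta,1+\theta/2}(K\times(\tau,+\infty))$ bound independent of $t$. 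Combined with the interior $C^{2+\theta'}$ regularity of $\chi$, this yields the required bound on $w=\tilde u-\chi$.

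With the three hypotheses of Theorem \ref{LTB} verified, it delivers a constant $\ell\in\mathbb{R}$ such that $u(x,t)+ct-\chi(x)\to\ell$ uniformly in $\Omega$ as $t\to+\infty$. Setting $K:=-\ell$ gives the stated convergence; from the comparison bound $|\ell|\leq\max(|A|,|B|)$, the constant $K$ is controlled by $\|u_0\|_\infty$.
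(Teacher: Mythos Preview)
Your proposal is correct and follows essentially the same route as the paper: apply Theorem~\ref{LTB} with $u_1=u$ and $u_2=\chi-ct$, use Proposition~\ref{chibounded} and Lemma~\ref{lem:comparison} to get boundedness of $u_1-u_2$, then observe that $\tilde u=u+ct$ is a globally bounded solution of $\tilde u_t+H[\tilde u]=c$ so that the interior estimates of \cite{tw} (together with the regularity of $\chi$) yield \eqref{assnuova}. The only cosmetic difference is that the paper writes the comparison bounds directly as $-\|u_0\|_\infty\le u-(\chi-ct)\le\|u_0\|_\infty-\inf\chi$ (using the normalization $\sup\chi=0$) rather than introducing your $A,B$.
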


 \begin{proof} The result is a straightforward application of Theorem \ref{LTB} 
to $u_1(x,t):=u(x,t)$ and
$u_2(x,t):=\chi(x)-ct$, once we check the assumptions.

We recall that $\chi$ is bounded, by Proposition
\ref{chibounded}. Then, by the comparison principle of
Lemma~\ref{lem:comparison}, we have that
\[-\|u_0\|_\infty\leq u(x,t)-(\chi(x)-ct)\leq  \|u_0\|_\infty-\inf \chi. \] 
This implies that $u_1-u_2$ is bounded. 
The same  inequality also implies  that $u(x,t)+ct$ is bounded in 
$\Omega\times (0, +\infty)$. 
So,  both $u(x,t)+ct$  and $\chi(x)$ are  globally bounded solutions to $\tilde u_t+H[\tilde u]=c$. 
Let $\tilde u$ be  a solution to $\tilde u_t+H[\tilde u]=c$, such that $ \tilde 
u \in L^{\infty}(\Omega\times(0, \infty))$. Arguing as in Theorem 
\ref{escauchy}, by Theorem 1.1 in \cite{tw},  we infer that, 
for all $\tau>0$ and all $\Omega'\subset\subset \Omega$, there exist
$\theta,C>0$ such that  $\|D^2\tilde u\|_{ C^{ \theta, 
\theta/2}(\Omega'\times(\tau,+\infty))}\leq C$.  
Hence, using the equation, we eventually find that $\tilde u\in C^{2+ 
\theta, 1+\theta/2}(\Omega'\times(\tau,+\infty))$.  In particular, we have that  
this estimate holds for both $u(x,t)+ct$
and $\chi$, which implies that the hypothesis \eqref{assnuova} is satisfied. 
 \end{proof}

\end{document}